\documentclass[a4paper,fleqn]{cas-sc}

\usepackage[numbers]{natbib}
\usepackage{amsmath,amssymb,amsthm,mathtools}
\usepackage{algorithm}
\usepackage{algpseudocode}
\usepackage{booktabs}
\usepackage{graphicx}
\usepackage{microtype}
\usepackage{flushend}

\RenewDocumentCommand{\printorcid}{}{}

\newtheorem{theorem}{Theorem}[section]
\newtheorem{proposition}[theorem]{Proposition}
\newtheorem{lemma}[theorem]{Lemma}

\theoremstyle{remark}

\newcommand{\R}{\mathbb{R}}
\newcommand{\sym}{\operatorname{sym}}
\newcommand{\prox}{\operatorname{prox}}
\newcommand{\argmax}{\operatorname*{arg\,max}}

\begin{document}
\let\WriteBookmarks\relax
\def\floatpagepagefraction{1}
\def\textpagefraction{.001}

\shorttitle{Coordinate Proximal Predictor--Corrector for Monotone AVEs}
\shortauthors{H. Wang and Y. Xia}

\title[mode=title]{A Line-Search-Free Coordinate Proximal Predictor--Corrector Method for Monotone Absolute Value Equations}

\author[1]{Haotian Wang}
\author[1]{Yong Xia}

\cormark[1]
\ead{yxia@buaa.edu.cn}
\cortext[1]{Corresponding author.}
\affiliation[1]{organization={School of Mathematical Sciences, Beihang University},
                addressline={No. 37 Xueyuan Road},
                city={Beijing}, postcode={100191}, country={China}}

\begin{abstract}
	We consider the absolute value equation (AVE) $Ax-|x|=b$ under residual
	monotonicity.  We characterize this property through the symmetric part of the
	coefficient matrix, thereby allowing nonsymmetry, and propose a coordinate proximal
	predictor--corrector (CPPC) method based on an AVE-specific forward--proximal
	decomposition.  An exact scalar proximal update on the largest proximal-residual
	coordinate generates a predictor point, while a positive-alignment identity certifies
	the ensuing full-residual separating-hyperplane correction without backtracking.
	With the current matrix product cached, each iteration requires one new full
	matrix--vector product.  For a nonempty solution set, we prove Fej\'er monotonicity,
	whole-sequence convergence, and an $O(K^{-1/2})$ best-iterate residual bound.  A
	positive monotonicity margin further ensures unique solvability for every right-hand
	side and global linear convergence.  Numerical results identify regimes in which the
	reduced per-iteration work yields shorter solution times.
\end{abstract}

\begin{keywords}
	absolute value equation \sep monotone equation \sep coordinate proximal residual
	\sep predictor--corrector method \sep Fej\'er monotonicity
	\MSC{65H10 \sep 65K05 \sep 90C30}
\end{keywords}

\maketitle

\section{Introduction}

The absolute value equation (AVE)
\begin{equation}\label{eq:ave}
    0=\Phi(x):=Ax-|x|-b,
\end{equation}
where $A\in\R^{n\times n}$ and $b\in\R^n$, is a piecewise linear and generally
nonsmooth system.  It is closely related to linear complementarity problems and
arises in several mathematical programming reformulations
\cite{MangasarianMeyer2006,Mezzadri2020}.  Numerical methods span several families.
Optimization-based approaches include concave-minimization, proximal, and monotone
block coordinate descent methods \cite{Mangasarian2007,ShahsavariKetabchi2021,Luo2025}.
Generalized and semismooth Newton schemes are designed to achieve fast local or global convergence \cite{Mangasarian2009,Caccetta2011}, whereas Picard fixed-point, SOR-like, and
matrix-splitting iterations exploit linear-algebraic structure
\cite{Salkuyeh2014,GuoWuLi2019,LvMiao2024}.  Recent developments include
maximum-based iterations for generalized AVEs \cite{WuHanLi2024} and generalized
Gauss--Seidel iterations under $M$- and $H$-matrix structure \cite{Guo2025}.
Douglas--Rachford splitting has
also been developed for large-scale sparse AVEs \cite{ChenYuHan2023}.  Broader
accounts are given in \cite{Moosaei2015,Hladik2026}.

This paper is concerned with projection-type first-order methods.  The spectral
gradient projection (SGP) method of Yu et al.\ \cite{Yu2009SGP} solves monotone
nonlinear equations by combining a scalar spectral direction, a backtracking search,
and a separating-hyperplane correction.  Yu, Li and Yuan \cite{Yu2021MMSGP}
specialized this framework to \eqref{eq:ave} and replaced the scalar spectral factor
by a diagonal multivariate scaling.  Hua and Ma \cite{HuaMa2022} subsequently
introduced a relaxed spectral difference and a modified line search.  We refer to
these AVE variants as MMSGP and MMSGPs, respectively.  Their trial points have the
generic form
\[
    z^k=x^k+\alpha_kd^k,
\]
where $d^k$ is a scalar- or diagonal-scaled residual direction and $\alpha_k$ is obtained
by backtracking.  The search enforces
\[
    \langle\Phi(z^k),x^k-z^k\rangle>0,
\]
so that $\Phi(z^k)$ defines a separating-hyperplane correction; rejected trials require
further full-residual evaluations.  A line-search-free variant therefore needs an
analytic alignment certificate.

We propose a coordinate proximal predictor--corrector (CPPC) method that obtains such
a certificate from an exact scalar proximal predictor and retains the full-residual
separating-hyperplane corrector.  Since the predictor changes only one coordinate, its residual can be obtained from the cached matrix product by a single column update and leaves one new matrix--vector product per iteration.  We characterize monotonicity
by $\sym(A)\succeq I$ and, for a nonempty solution set, prove whole-sequence convergence
and an $O(K^{-1/2})$ best-iterate residual bound; a positive monotonicity margin yields
global linear convergence.  The predictor follows the mini-extragradient principle
\cite{LiuXia2025}, while its exact construction and certificate exploit the AVE structure.

The remainder is organized as follows.  Section~\ref{sec:formulation} gives the
monotonicity characterization and the proximal residual.  Section~\ref{sec:method}
states the algorithm, Section~\ref{sec:convergence} gives the convergence and rate analysis,
and Section~\ref{sec:numerical} reports numerical results.

\section{AVE structure and proximal residual}\label{sec:formulation}

All inner products are Euclidean; $I$ is the identity, $e_i$ is the $i$th coordinate
vector, and $\|\cdot\|$ denotes the Euclidean norm or its induced matrix norm.  Let
$\sym(A)=(A+A^\top)/2$, and define the componentwise negative part by
$x^-_i=\max\{-x_i,0\}$.  The solution set of \eqref{eq:ave} is denoted by $S:=\{x\in\R^n:\Phi(x)=0\}$.

\subsection{Monotonicity and structured splitting}

The following result gives the precise monotone regime of the AVE residual.

\begin{proposition}\label{prop:monotonicity}
The mapping $\Phi$ is monotone on $\R^n$ if and only if
\begin{equation}\label{eq:monotonicity-condition}
    \sym(A)\succeq I.
\end{equation}
\end{proposition}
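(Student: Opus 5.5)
Both implications reduce to the scalar fact that the absolute value is $1$-Lipschitz. Write $u=x-y$ and expand
\[
\langle \Phi(x)-\Phi(y),x-y\rangle=\langle Au,u\rangle-\langle |x|-|y|,u\rangle=u^\top\sym(A)u-\sum_{i=1}^n(|x_i|-|y_i|)(x_i-y_i).
\]

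\emph{Sufficiency.} Assume \eqref{eq:monotonicity-condition}. Each term satisfies
\[
(|x_i|-|y_i|)(x_i-y_i)\le \bigl||x_i|-|y_i|\bigr|\,|x_i-y_i|\le (x_i-y_i)^2.
\]
Summing over $i$ bounds the sum by $\|u\|^2$. Hence
\[
\langle\Phi(x)-\Phi(y),x-y\rangle\ge u^\top(\sym(A)-I)u\ge 0.
\]

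\emph{Necessity.} Here I choose test pairs on which the absolute-value term attains the bound $\|u\|^2$ exactly. Fix any direction $u\in\R^n$. Pick a point $y$ with $|y_i|>|u_i|$ and $\operatorname{sign}(y_i)=\operatorname{sign}(u_i)$ for each $i$, taking the sign $+$ when $u_i=0$. A concrete choice is $y=t\,s$ with $s_i=\pm1$ matching the signs of $u$ and $t>\max_i|u_i|$, and $x=y+u$.

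I then check the coordinates one at a time. Each $x_i$ has the same sign as $y_i$, so $|x_i|-|y_i|=s_i u_i$, and therefore
\[
(|x_i|-|y_i|)(x_i-y_i)=s_iu_i^2=u_i^2,
\]
since $s_i$ matches the sign of $u_i$. Even more simply, one could take $x=y+u$ with both points in the open positive orthant after a shift. That fails when $u$ has mixed signs, however, so the sign-matched choice is the one to use.

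With this pair the identity gives $0\le \langle\Phi(x)-\Phi(y),u\rangle=u^\top(\sym(A)-I)u$. Since $u$ was arbitrary, \eqref{eq:monotonicity-condition} follows.

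The only delicate step is constructing this test pair, that is, making $|\cdot|$ act locally as the identity along $u$. An alternative route is to take $x,y$ in the orthant $\{z:\operatorname{sign}(z)=s\}$, where $\Phi(z)=(A-D_s)z-b$ with $D_s=\operatorname{diag}(s)$. Monotonicity there gives $\sym(A)\succeq D_s$ on directions within that orthant, which is the same computation.
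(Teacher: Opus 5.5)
Your sufficiency argument is correct and matches the paper's. The necessity argument has an arithmetic error that breaks it. You take $y=ts$ with $s_i=\operatorname{sign}(u_i)$ and $x=y+u$, and you correctly get $|x_i|-|y_i|=s_iu_i=|u_i|$. But then $(|x_i|-|y_i|)(x_i-y_i)=s_iu_i^2$, and this equals $-u_i^2$ whenever $u_i<0$. Matching $s_i$ to the sign of $u_i$ does not turn $s_iu_i^2$ into $u_i^2$.

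On the orthant with sign pattern $s$ one has $|z|=D_sz$, where $D_s=\operatorname{diag}(s)$. So your test pair yields only
$u^\top\sym(A)u\geq u^\top D_su=\sum_{u_i>0}u_i^2-\sum_{u_i<0}u_i^2$,
which does not imply $\sym(A)\succeq I$. The scalar case shows this concretely. For $n=1$ and $u=-1$, your pair $y=-t$, $x=-t-1$ lies where $\Phi(z)=(a+1)z-b$. Monotonicity there certifies only $a\geq-1$, not $a\geq1$.

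The fix is the route you discarded. Your claim that the positive-orthant shift ``fails when $u$ has mixed signs'' is false. Take $y=(t,\dots,t)^\top$ with $t>\max_i|u_i|$. Then every coordinate $x_i=t+u_i$ of $x=y+u$ is positive whatever the sign of $u_i$, so $|x|-|y|=u$. Your identity then gives $u^\top(\sym(A)-I)u\geq0$, and this is exactly the paper's argument.

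The same correction applies to your closing remark. The open orthant with pattern $s$ contains pairs $x,y$ with $x-y=u$ for every direction $u$, not only directions lying in that orthant. Monotonicity therefore yields $\sym(A)\succeq D_s$ for every $s$. The choice that gives the result is $s=(1,\dots,1)$, not the sign pattern of $u$.
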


\begin{proof}
Let $z=x-y$.  Since
$\langle |x|-|y|,x-y\rangle\leq\|x-y\|^2$, condition
\eqref{eq:monotonicity-condition} gives
\[
\begin{aligned}
\langle\Phi(x)-\Phi(y),x-y\rangle
=z^\top Az-\langle |x|-|y|,z\rangle
\geq z^\top(\sym(A)-I)z\geq0.
\end{aligned}
\]
Conversely, fix $z\in\R^n$ and choose $y$ so that both $y$ and $y+z$ are
componentwise positive.  Setting $x=y+z$ gives $|x|-|y|=z$.  Monotonicity then
implies $z^\top(\sym(A)-I)z\geq0$.  Since $z$ is arbitrary,
\eqref{eq:monotonicity-condition} follows.
\end{proof}

Condition \eqref{eq:monotonicity-condition} depends only on the symmetric
part of $A$. Hence nonsymmetric matrices are allowed, and monotonicity imposes
no restriction on the skew-symmetric part.

Set
{\small
\begin{equation}\label{eq:Fg}
    F(x):=(A-I)x-b,\qquad
    G(x):=\|x^-\|^2=\sum_{i=1}^n g(x_i),\qquad
    g(t):=(t^-)^2.
\end{equation}
}
Then $g'(t)=t-|t|$ and
\begin{equation}\label{eq:structured-splitting}
    \begin{aligned}
    \Phi(x)=F(x)+\nabla G(x)=(A-I)x-b+\nabla \|x^-\|^2.
    \end{aligned}
\end{equation}
For symmetric $A$, integrating \eqref{eq:structured-splitting} gives the
piecewise-quadratic AVE potential used by Noor et al.\ \cite{Noor2012}. Here the identity is used directly at the operator level and therefore does not require symmetry: $F$ is linear, while $g$ has an exact scalar proximal map.  This permits an exact coordinate predictor whose interaction with the unsplit residual $\Phi$ can be evaluated algebraically.
Throughout the rest of the paper we assume
\begin{equation}\label{eq:base-assumption}
    \sym(A)\succeq I,
    \qquad S\neq\varnothing.
\end{equation}
The nonemptiness condition is separate: monotonicity at the semidefinite boundary
does not ensure a solution for every $b$.

\subsection{Exact coordinate proximal residual}

Choose $0<\rho<1$, $\epsilon>0$ and constants
\begin{equation}\label{eq:gamma}
    \ell_i=\max\{a_{ii}-1,\epsilon\},
    \qquad \gamma_i:=\frac{\rho}{\ell_i},
    \qquad i=1,\ldots,n.
\end{equation}
Condition \eqref{eq:monotonicity-condition} implies $a_{ii}\geq1$. For $\gamma>0$,
\begin{equation}\label{eq:prox-formula}
    \prox_{\gamma g}(z)
    =\begin{cases}
       z, & z\geq0,\\[1mm]
       \displaystyle z/(1+2\gamma), & z<0,
     \end{cases}
\end{equation}
where $\prox_{\gamma h}(z):= \arg\min_{t\in\R}\{h(t)+(2\gamma)^{-1}|t-z|^2\}$.

Define the coordinate proximal residual
{\small
\begin{equation}\label{eq:prox-residual}
    R_i(x):=\gamma_i^{-1}
    \left[x_i-\prox_{\gamma_i g}\bigl(x_i-\gamma_iF_i(x)\bigr)\right],
    \qquad R(x):=(R_i(x))_{i=1}^n.
\end{equation}
}
Thus $\gamma_iR_i(x)$ is precisely the displacement produced by the exact scalar
forward--proximal step.  The residual $R$ is used only to select and construct the
predictor; the corrector below continues to use the original AVE residual $\Phi$.
Writing $q_i(x)=x_i-\gamma_iF_i(x)$, formula \eqref{eq:prox-formula} gives
\begin{equation}\label{eq:residual-explicit}
R_i(x)=
\begin{cases}
    ((A-I)x-b)_i, & q_i(x)\geq0,\\[1mm]
    \displaystyle((A+I)x-b)_i/(1+2\gamma_i), & q_i(x)<0.
\end{cases}
\end{equation}

\begin{proposition}\label{prop:stationarity}
For every $x\in\R^n$, $R(x)=0$ if and only if $\Phi(x)=0$.
\end{proposition}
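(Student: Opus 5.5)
The plan is to argue coordinatewise, using the explicit formula \eqref{eq:residual-explicit}. Since $\gamma_i>0$, we have $R_i(x)=0$ iff $x_i=\prox_{\gamma_i g}(q_i(x))$. By the optimality condition for the scalar proximal problem ($g$ is convex and differentiable), this holds iff
\[
0=g'(x_i)+\gamma_i^{-1}(x_i-q_i(x))=g'(x_i)+F_i(x).
\]
By \eqref{eq:structured-splitting}, $g'(x_i)+F_i(x)$ is exactly $\Phi_i(x)$. This is the standard fixed-point characterization of forward--backward steps, applied one coordinate at a time. It works because $G$ is separable, so $\partial_i G(x)=g'(x_i)$.

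To make this rigorous, I will first verify the proximal characterization for the scalar convex function $g$. For any $z$, the minimizer $t^\ast=\prox_{\gamma g}(z)$ is the unique solution of $g'(t)+(t-z)/\gamma=0$, since the objective is strictly convex and $C^1$. Substituting $t^\ast=x_i$ and $z=q_i(x)=x_i-\gamma_iF_i(x)$ gives $g'(x_i)+F_i(x)=0$. Conversely, if $g'(x_i)+F_i(x)=0$, then $x_i$ satisfies the optimality equation, so uniqueness forces $x_i=\prox_{\gamma_i g}(q_i(x))$.

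As a cross-check, I can also argue from the two cases in \eqref{eq:residual-explicit}. Suppose $R_i(x)=0$ with $q_i\ge0$. Then $((A-I)x-b)_i=0$, hence $F_i(x)=0$ and $q_i=x_i\ge0$, so $|x_i|=x_i$ and $\Phi_i(x)=0$. Suppose instead $q_i<0$. Then $((A+I)x-b)_i=0$, so $F_i=-2x_i$ and $q_i=x_i(1+2\gamma_i)<0$, giving $x_i<0$ and again $\Phi_i(x)=0$. The converse direction uses the sign of $x_i$ in the same way: if $x_i\ge0$ then $F_i=0$ and $q_i=x_i\ge0$; if $x_i<0$ then $q_i=(1+2\gamma_i)x_i<0$.

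The only point needing care is the consistency of the case split: the branch selected by the sign of $q_i(x)$ must match the sign of $x_i$ at a zero. The prox-optimality route avoids this issue entirely, so I will present that argument and mention the case check only as confirmation.
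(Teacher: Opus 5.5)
Your proof is correct and follows essentially the same route as the paper. Both arguments use the optimality (fixed-point) characterization of the scalar proximal map to get $R_i(x)=0 \iff F_i(x)+g'(x_i)=0$, and then apply the splitting $\Phi=F+\nabla G$ coordinatewise. Your sign-case check against the explicit formula for $R_i$ is also valid, but it is only a confirmation and the paper does not need it.
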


\begin{proof}
The fixed-point characterization of the proximal map yields
{\small
\[
R_i(x)=0
\iff x_i=\prox_{\gamma_i g}(x_i-\gamma_iF_i(x))
\iff F_i(x)+g'(x_i)=0.
\]
}
Apply \eqref{eq:structured-splitting} coordinatewise.
\end{proof}

\section{The line-search-free predictor--corrector method}\label{sec:method}

\subsection{The CPPC iteration}

CPPC has two deliberately different stages: a local exact predictor and a global residual
corrector.  At $x^k$, choose
\begin{equation}\label{eq:greedy-index}
    i_k\in\argmax_{1\leq i\leq n}|R_i(x^k)|
\end{equation}
and change only this coordinate:
\begin{equation}\label{eq:predictor}
    y_j^k=x_j^k\quad(j\neq i_k),
    \qquad
    y_{i_k}^k=\prox_{\gamma_{i_k}g}
    \bigl(x_{i_k}^k-\gamma_{i_k}F_{i_k}(x^k)\bigr).
\end{equation}
Let $d^k:=x^k-y^k$ and $v^k:=\Phi(y^k)$.
The vector $v^k$ is the full AVE residual, not a coordinate approximation.  If
$d^k\neq0$, set
\begin{equation}\label{eq:corrector}
    \lambda_k:=\frac{\langle v^k,d^k\rangle}{\|v^k\|^2},
    \qquad
    x^{k+1}:=x^k-\lambda_kv^k.
\end{equation}
Lemma~\ref{lem:alignment} below proves algebraically that $v^k\neq0$ and
$\lambda_k>0$ at every nonterminal iteration; no acceptance search is required.

\begin{algorithm}[t]
\caption{Line-search-free coordinate proximal predictor--corrector method (CPPC)}
\label{alg:cppc}
\begin{algorithmic}[1]
\Require $A,b,x^0$, $0<\rho<1$, and $\ell_i$ satisfying \eqref{eq:gamma}
\State Set $\gamma_i=\rho/\ell_i$, $i=1,\ldots,n$, and $u^0=Ax^0$
\For{$k=0,1,2,\ldots$}
    \State Evaluate $R(x^k)$ from $u^k$ and choose $i_k$ by \eqref{eq:greedy-index}
    \If{$R_{i_k}(x^k)=0$}
        \State \Return $x^k$
    \EndIf
    \State Form $y^k$ by \eqref{eq:predictor}; set $d^k=x^k-y^k$
    \State Set $v^k=u^k+(y_{i_k}^k-x_{i_k}^k)Ae_{i_k}-|y^k|-b$
    \State Set $\lambda_k=\langle v^k,d^k\rangle/\|v^k\|^2$
    \State Set $x^{k+1}=x^k-\lambda_kv^k$ and $u^{k+1}=Ax^{k+1}$
\EndFor
\end{algorithmic}
\end{algorithm}

\subsection{Relation to spectral projection iterations}

The AVE-specific predictor treats the absolute-value term exactly rather than through a
coordinate Lipschitz bound.  Indeed, under \eqref{eq:monotonicity-condition},
\begin{equation}\label{eq:coordinate-constants}
\begin{aligned}
 |F_i(x+te_i)-F_i(x)|&=(a_{ii}-1)|t|,\\
 |\Phi_i(x+te_i)-\Phi_i(x)|&\leq(a_{ii}+1)|t|.
\end{aligned}
\end{equation}
Thus the forward part of the proximal predictor uses the smaller constant $a_{ii}-1$,
while \eqref{eq:prox-formula} resolves the remaining scalar nonlinearity without an
approximation.

For the unconstrained AVE, all four methods use the full-residual correction
\begin{equation}\label{eq:spectral-correction}
    x^{k+1}=x^k-
    \frac{\langle\Phi(z^k),x^k-z^k\rangle}{\|\Phi(z^k)\|^2}\Phi(z^k).
\end{equation}
SGP takes $d^k=-\theta_k\Phi(x^k)$, whereas MMSGP and MMSGPs use
$d^k=-D_k\Phi(x^k)$ with a safeguarded diagonal matrix $D_k$, and determine
$z^k=x^k+\alpha_kd^k$ by backtracking.  CPPC instead takes $z^k=y^k$ and
Lemma~\ref{lem:alignment} certifies that the numerator is positive.  Thus CPPC removes the line search without changing the full-residual correction geometry.

With $Ax^k$ cached,
\[
    Ay^k=Ax^k+(y_{i_k}^k-x_{i_k}^k)Ae_{i_k},
\]
so $v^k$ is obtained by a column update, and the only new global matrix--vector
product is $Ax^{k+1}$.  By contrast, the spectral methods require products along the
search direction and at the corrected iterate, together with full-vector residual
tests during backtracking.  CPPC therefore replaces one global product and the line
search with a coordinate scan and a column update.  The benefit depends on the
problem structure and iteration count.

\section{Convergence and rate analysis}\label{sec:convergence}

\subsection{Global convergence and best-iterate rate}

The predictor displacement satisfies
\begin{equation}\label{eq:displacement}
    d_{i_k}^k=\gamma_{i_k}R_{i_k}(x^k),
    \qquad
    \|d^k\|=\gamma_{i_k}\|R(x^k)\|_\infty.
\end{equation}
The following identity is the acceptance certificate that replaces backtracking.

\begin{lemma}[Exact alignment]\label{lem:alignment}
At every nonterminal iteration,
$\langle v^k,d^k\rangle>0$.
\end{lemma}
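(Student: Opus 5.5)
The plan is to compute $\langle v^k,d^k\rangle$ exactly, exploiting the fact that $d^k$ is supported on the single coordinate $i:=i_k$. Write $t:=d^k_{i}=x^k_i-y^k_i=\gamma_iR_i(x^k)$ by \eqref{eq:displacement}. At a nonterminal iteration $R_{i}(x^k)\neq0$, so $t\neq0$. Since $d^k=te_i$, we get $\langle v^k,d^k\rangle=t\,\Phi_i(y^k)$, and the whole task reduces to computing the scalar $\Phi_i(y^k)$.

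First I would use the splitting \eqref{eq:structured-splitting} coordinatewise, which gives $\Phi_i(y^k)=F_i(y^k)+g'(y^k_i)$. The two terms are then treated separately.

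\emph{The linear part.} Because $y^k=x^k-te_i$ and $F$ is affine with diagonal entry $a_{ii}-1$, we have $F_i(y^k)=F_i(x^k)-(a_{ii}-1)t$; this is the equality case of \eqref{eq:coordinate-constants}.

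\emph{The nonlinear part.} Here I would invoke the optimality condition of the scalar proximal map, using the paper's normalization $\prox_{\gamma h}(z)=\arg\min\{h(t)+(2\gamma)^{-1}|t-z|^2\}$. With $g$ differentiable, the point $y^k_i=\prox_{\gamma_ig}(z)$ is characterized by $g'(y^k_i)=(z-y^k_i)/\gamma_i$. This is the same fixed-point characterization used in Proposition~\ref{prop:stationarity}, and it can be checked directly against \eqref{eq:prox-formula}. Substituting $z=x^k_i-\gamma_iF_i(x^k)$ gives $g'(y^k_i)=t/\gamma_i-F_i(x^k)$.

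Adding the two parts, the $F_i(x^k)$ terms cancel, leaving
\[
\Phi_i(y^k)=t\bigl(\gamma_i^{-1}-(a_{ii}-1)\bigr),\qquad
\langle v^k,d^k\rangle=t^2\Bigl(\frac{\ell_i}{\rho}-(a_{ii}-1)\Bigr).
\]
For positivity, recall from \eqref{eq:gamma} that $\ell_i\ge a_{ii}-1\ge0$ (the second inequality because $a_{ii}\ge1$ under \eqref{eq:monotonicity-condition}) and $\ell_i\ge\epsilon>0$. Since $0<\rho<1$, it follows that $\ell_i/\rho>\ell_i\ge a_{ii}-1$. Together with $t\neq0$, this gives $\langle v^k,d^k\rangle>0$, and as a byproduct $v^k\neq0$ and $\lambda_k>0$.

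The only step that needs care is the proximal optimality relation: the convention in $\prox$ carries the factor $(2\gamma)^{-1}$ while $g$ carries no factor $\tfrac12$. I would therefore verify $g'(y)=(z-y)/\gamma$ against the explicit formula \eqref{eq:prox-formula} in both branches. For $z\ge0$, $y=z$ and $g'(y)=0$, which matches. For $z<0$, $y=z/(1+2\gamma)$, so $g'(y)=2y$, and $(z-y)/\gamma=2y$ also holds. This fixes the constant in the final identity.
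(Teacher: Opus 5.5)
Your proposal is correct and follows the paper's proof essentially step for step. Both arguments combine the scalar proximal optimality condition $g'(y^k_{i_k})=d^k_{i_k}/\gamma_{i_k}-F_{i_k}(x^k)$ with $F_{i_k}(y^k)-F_{i_k}(x^k)=-(a_{i_ki_k}-1)d^k_{i_k}$ to get the exact identity $\langle v^k,d^k\rangle=(\gamma_{i_k}^{-1}-(a_{i_ki_k}-1))\|d^k\|^2$. The only difference is that the paper goes on to the quantitative bound $\langle v^k,d^k\rangle\geq(1-\rho)\gamma_{i_k}^{-1}\|d^k\|^2$, which your identity gives at once via $\ell_{i_k}\geq a_{i_ki_k}-1$ and which is reused later in the residual bounds; you stop at strict positivity.
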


\begin{proof}
The optimality condition in \eqref{eq:predictor} is
\[
    \frac{x_{i_k}^k-y_{i_k}^k}{\gamma_{i_k}}-F_{i_k}(x^k)
    =g'(y_{i_k}^k).
\]
Since $d^k$ is supported on $i_k$ and
$F_{i_k}(y^k)-F_{i_k}(x^k)=-(a_{i_ki_k}-1)d_{i_k}^k$, we obtain
\[
\langle v^k,d^k\rangle
=\left(\frac{1}{\gamma_{i_k}}-(a_{i_ki_k}-1)\right)\|d^k\|^2
\geq\frac{1-\rho}{\gamma_{i_k}}\|d^k\|^2.
\]
Here the inequality follows from $\ell_{i_k}\geq a_{i_ki_k}-1$ and
$\gamma_{i_k}=\rho/\ell_{i_k}$.  At a nonterminal iteration,
\eqref{eq:displacement} gives $d^k\neq0$; hence the right-hand side is positive.
\end{proof}

For $x^*\in S$, monotonicity gives $\langle v^k,y^k-x^*\rangle\geq0$. Thus $S$ lies in the halfspace
\[
    \mathcal H_k:=\{z:\langle v^k,z-y^k\rangle\leq0\},
\]
whereas Lemma~\ref{lem:alignment} places $x^k$ strictly outside $\mathcal H_k$.
Consequently, the exact coordinate predictor generates a valid separating hyperplane for
the full AVE residual, and the corrector \eqref{eq:corrector} projects $x^k$ onto its boundary.

\begin{lemma}[Fej\'er decrease]\label{lem:fejer}
For every $x^*\in S$,
\begin{equation}\label{eq:fejer}
    \|x^{k+1}-x^*\|^2
    \leq\|x^k-x^*\|^2-
    \frac{\langle v^k,d^k\rangle^2}{\|v^k\|^2}.
\end{equation}
\end{lemma}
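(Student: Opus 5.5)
The plan is the standard projection-onto-halfspace argument, using the alignment certificate of Lemma~\ref{lem:alignment} and the monotonicity characterization of Proposition~\ref{prop:monotonicity}. Fix $x^*\in S$ and a nonterminal iteration $k$, so $d^k\neq0$. By Lemma~\ref{lem:alignment}, $\langle v^k,d^k\rangle>0$, which forces $v^k\neq0$; hence $\lambda_k$ in \eqref{eq:corrector} is well defined and positive. Expanding the update $x^{k+1}=x^k-\lambda_kv^k$ gives
\[
\|x^{k+1}-x^*\|^2=\|x^k-x^*\|^2-2\lambda_k\langle v^k,x^k-x^*\rangle+\lambda_k^2\|v^k\|^2 .
\]

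The key step is a lower bound on $\langle v^k,x^k-x^*\rangle$. Split $x^k-x^*=(x^k-y^k)+(y^k-x^*)=d^k+(y^k-x^*)$. Since $\Phi(x^*)=0$ and \eqref{eq:base-assumption} gives $\sym(A)\succeq I$, Proposition~\ref{prop:monotonicity} yields
\[
\langle v^k,y^k-x^*\rangle=\langle\Phi(y^k)-\Phi(x^*),y^k-x^*\rangle\geq0 .
\]
Therefore $\langle v^k,x^k-x^*\rangle\geq\langle v^k,d^k\rangle$.

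Substituting this bound, with $\lambda_k>0$, and using $\lambda_k\|v^k\|^2=\langle v^k,d^k\rangle$, the last two terms combine to
\[
-2\lambda_k\langle v^k,d^k\rangle+\lambda_k\langle v^k,d^k\rangle=-\lambda_k\langle v^k,d^k\rangle=-\frac{\langle v^k,d^k\rangle^2}{\|v^k\|^2},
\]
which is \eqref{eq:fejer}. Equivalently, $x^{k+1}$ is the projection of $x^k$ onto the boundary of $\mathcal H_k$, which contains $x^*$. The bound then follows from the Pythagorean inequality for projections onto a halfspace containing $x^*$, since $x^k\notin\mathcal H_k$.

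No real obstacle is expected. The only point requiring care is that the bound concerns nonterminal iterations, where $v^k\neq0$ so the quotient is defined. At a terminal iteration the algorithm stops with $x^k\in S$, by Proposition~\ref{prop:stationarity}, and the inequality is not needed.
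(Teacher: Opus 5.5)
Your proposal is correct and follows essentially the same route as the paper. It splits $x^k-x^*=d^k+(y^k-x^*)$, uses monotonicity to get $\langle v^k,x^k-x^*\rangle\geq\langle v^k,d^k\rangle$, then expands the corrector update and substitutes $\lambda_k$; your explicit use of Lemma~\ref{lem:alignment} to secure $v^k\neq0$ and $\lambda_k>0$ spells out what the paper leaves implicit.
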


\begin{proof}
Since $x^k=y^k+d^k$,
$\langle v^k,x^k-x^*\rangle\geq\langle v^k,d^k\rangle$.
Expanding \eqref{eq:corrector} and substituting the definition of $\lambda_k$
gives \eqref{eq:fejer}.
\end{proof}

We next relate the proximal residual to $\Phi$.  Let
\[
    p_i(x):=\prox_{\gamma_i g}(x_i-\gamma_iF_i(x))
           =x_i-\gamma_iR_i(x).
\]
Since $g'$ is $2$-Lipschitz,
\[
    |\Phi_i(x)-R_i(x)|
    =|g'(x_i)-g'(p_i(x))|
    \leq2\gamma_i|R_i(x)|.
\]
Define
{\small
\begin{equation}\label{eq:constants}
\begin{split}
    C_R:=\left(\sum_{i=1}^n(1+2\gamma_i)^2\right)^{1/2},\quad
    C_S:=\max_i\gamma_i(\|Ae_i\|+1),\qquad
     C_\Phi:=C_R+C_S,
    \qquad \gamma_*:=\min_i\gamma_i.
\end{split}
\end{equation}
}
Then, with $r_k:=\|R(x^k)\|_\infty$,
{\small
\begin{equation}\label{eq:residual-bounds}
    \|\Phi(x^k)\|\leq C_Rr_k,
    \qquad
    \|v^k\|\leq C_\Phi r_k,
    \qquad
    \langle v^k,d^k\rangle\geq(1-\rho)\gamma_*r_k^2.
\end{equation}
}
Combining \eqref{eq:fejer} and \eqref{eq:residual-bounds} yields
\begin{equation}\label{eq:fundamental}
    \|x^{k+1}-x^*\|^2
    \leq\|x^k-x^*\|^2-\alpha r_k^2,
    \qquad
    \alpha:=\frac{(1-\rho)^2\gamma_*^2}{C_\Phi^2}.
\end{equation}

\begin{theorem}[Global convergence]\label{thm:global}
Under \eqref{eq:base-assumption} and \eqref{eq:gamma}, Algorithm~\ref{alg:cppc}
is well defined and either terminates at a point in $S$ or generates a sequence
converging to a point in $S$.  In the latter case,
\[
    \sum_{k=0}^{\infty}\|R(x^k)\|_\infty^2<\infty,
    \qquad R(x^k)\to0,
    \qquad \Phi(x^k)\to0.
\]
\end{theorem}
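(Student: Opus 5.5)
The plan is the standard Fejér-monotone argument built on \eqref{eq:fundamental}. \emph{Well-definedness and termination.} At iteration $k$, $R(x^k)$ is computable from $u^k$. If $R_{i_k}(x^k)=0$, then by \eqref{eq:greedy-index} $R(x^k)=0$, and Proposition~\ref{prop:stationarity} gives $x^k\in S$. Otherwise \eqref{eq:displacement} gives $d^k\neq0$. Lemma~\ref{lem:alignment} then gives $\langle v^k,d^k\rangle>0$, which forces $v^k\neq0$, so $\lambda_k$ is well defined and positive. The column-update formula for $v^k$ is exact by linearity of $A$, so every step is well defined.

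\emph{Summability and residual convergence.} Assume the algorithm does not terminate and fix $x^*\in S$, which exists by \eqref{eq:base-assumption}. Summing \eqref{eq:fundamental} from $k=0$ to $K$ telescopes to
\[
\alpha\sum_{k=0}^{K}r_k^2\le\|x^0-x^*\|^2.
\]
Since $\alpha>0$ (because $\rho<1$, $\gamma_*>0$ and $C_\Phi<\infty$), this gives $\sum_k\|R(x^k)\|_\infty^2<\infty$ and hence $R(x^k)\to0$. The first bound in \eqref{eq:residual-bounds} then gives $\Phi(x^k)\to0$. I should briefly verify that bound. Since $p_i(x)=x_i-\gamma_iR_i(x)$, we have $|\Phi_i|\le|R_i|+2\gamma_i|R_i|\le(1+2\gamma_i)r_k$, and summing squares gives $C_R$. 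The bound on $\|v^k\|$ follows from $\Phi(y^k)-\Phi(x^k)=-d^k_{i_k}Ae_{i_k}+(|x^k|-|y^k|)$, which has norm at most $\gamma_{i_k}(\|Ae_{i_k}\|+1)r_k$.

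\emph{Whole-sequence convergence.} This is the main step, though it is standard. By \eqref{eq:fundamental}, $\|x^k-x^*\|$ is nonincreasing for every $x^*\in S$. Hence $(x^k)$ is bounded and $\lim_k\|x^k-x^*\|$ exists for each $x^*\in S$. Take a cluster point $\bar x$, with $x^{k_j}\to\bar x$. Since $\Phi$ is continuous (it is piecewise linear), $\Phi(\bar x)=\lim_j\Phi(x^{k_j})=0$, so $\bar x\in S$. Applying the existence of the limit with $x^*=\bar x$ gives $\lim_k\|x^k-\bar x\|=\lim_j\|x^{k_j}-\bar x\|=0$. Therefore the whole sequence converges to $\bar x\in S$.

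The only subtle point is confirming that the constants in \eqref{eq:fundamental} are uniform in $k$. They are, because $\gamma_*$ and $C_\Phi$ depend only on $A$, $\rho$ and $\epsilon$. Once that is checked, the rest is routine.
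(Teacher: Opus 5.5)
Your proposal is correct and follows the paper's proof step for step. Termination comes from \eqref{eq:greedy-index} and Proposition~\ref{prop:stationarity}, well-definedness from Lemma~\ref{lem:alignment}, summability from telescoping \eqref{eq:fundamental}, and whole-sequence convergence from the Fej\'er/cluster-point argument; you additionally verify \eqref{eq:residual-bounds} and spell out the Fej\'er limit step, which the paper only states and invokes.
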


\begin{proof}
At termination, the stopping test and \eqref{eq:greedy-index} give $R(x^k)=0$, hence $x^k\in S$ by
Proposition~\ref{prop:stationarity}.  Otherwise, Lemma~\ref{lem:alignment} guarantees
a positive denominator and numerator in every
nonterminal correction.  Summing \eqref{eq:fundamental} gives
$\sum_kr_k^2<\infty$, hence $r_k\to0$ and, by \eqref{eq:residual-bounds},
$\Phi(x^k)\to0$.  Lemma~\ref{lem:fejer} makes $\{x^k\}$ Fej\'er monotone with
respect to $S$, hence bounded.  Every cluster point belongs to $S$ by continuity of
$\Phi$.  Fej\'er monotonicity then implies convergence of the whole sequence to such a
cluster point.
\end{proof}

\begin{theorem}[Best-iterate residual rate]\label{thm:complexity}
For every $x^*\in S$ and index $K\geq0$,
\begin{align}
    &\min_{0\leq k\leq K}\|R(x^k)\|_\infty
    \leq
    \frac{C_\Phi\|x^0-x^*\|}
         {(1-\rho)\gamma_*\sqrt{K+1}},\label{eq:best-R}\\
    &\min_{0\leq k\leq K}\|Ax^k-|x^k|-b\|
    \leq
    \frac{C_RC_\Phi\|x^0-x^*\|}
         {(1-\rho)\gamma_*\sqrt{K+1}}.\label{eq:best-Phi}
\end{align}
\end{theorem}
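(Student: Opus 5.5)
The plan is to telescope the fundamental inequality \eqref{eq:fundamental} over the first $K+1$ iterations and then bound the minimum by the average. The case of finite termination is handled separately. If Algorithm~\ref{alg:cppc} stops at some index $\bar k\le K$, then $R(x^{\bar k})=0$, and by Proposition~\ref{prop:stationarity} also $\Phi(x^{\bar k})=0$. Both minima in \eqref{eq:best-R} and \eqref{eq:best-Phi} are then zero and there is nothing to prove. (If the method terminates, the iterates beyond $\bar k$ can be understood as the constant sequence $x^{\bar k}$.) We may therefore assume that every iteration $k=0,\ldots,K$ is nonterminal. Then Lemma~\ref{lem:alignment} holds at each of them, so $v^k\neq0$ and \eqref{eq:fundamental} is valid for each $k$.

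Fix $x^*\in S$ and sum \eqref{eq:fundamental} for $k=0,\ldots,K$. This gives
\[
\alpha\sum_{k=0}^{K}r_k^2\le\|x^0-x^*\|^2-\|x^{K+1}-x^*\|^2\le\|x^0-x^*\|^2 .
\]
Hence
\[
(K+1)\min_{0\le k\le K}r_k^2\le\sum_{k=0}^K r_k^2\le \alpha^{-1}\|x^0-x^*\|^2 .
\]
Taking square roots and inserting $\alpha^{-1/2}=C_\Phi/((1-\rho)\gamma_*)$ yields \eqref{eq:best-R} directly.

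For \eqref{eq:best-Phi}, I would use the first bound in \eqref{eq:residual-bounds}, namely $\|\Phi(x^k)\|\le C_R r_k$, which holds at every iterate. Choose $\hat k\in\{0,\ldots,K\}$ attaining $\min_k r_k$. Then
\[
\min_{0\le k\le K}\|\Phi(x^k)\|\le\|\Phi(x^{\hat k})\|\le C_R r_{\hat k},
\]
and \eqref{eq:best-R} finishes the argument. Note that we cannot simply take the minimum of $\|\Phi(x^k)\|$ and compare it index by index; we must pass through this common index $\hat k$.

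The only step that needs real care is justifying \eqref{eq:residual-bounds}, since the paper states it without proof. I would verify it as follows.

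\emph{Bound on $\|\Phi(x^k)\|$.} Use $|\Phi_i-R_i|\le2\gamma_i|R_i|$, which gives $|\Phi_i|\le(1+2\gamma_i)r_k$ for each $i$. Taking the Euclidean norm gives $\|\Phi(x^k)\|\le C_R r_k$.

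\emph{Bound on $\|v^k\|$.} Write $v^k=\Phi(x^k)+(\Phi(y^k)-\Phi(x^k))$. Since $y^k$ differs from $x^k$ only in coordinate $i_k$, the difference equals $-d_{i_k}Ae_{i_k}-(|y^k|-|x^k|)$. Its norm is at most $(\|Ae_{i_k}\|+1)|d_{i_k}|$. By \eqref{eq:displacement}, $|d_{i_k}|=\gamma_{i_k}r_k$, so this is at most $C_S r_k$. Together with the first bound, $\|v^k\|\le C_\Phi r_k$.

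\emph{Bound on $\langle v^k,d^k\rangle$.} The proof of Lemma~\ref{lem:alignment} gives $\langle v^k,d^k\rangle\ge(1-\rho)\gamma_{i_k}^{-1}\|d^k\|^2$. Substituting $\|d^k\|=\gamma_{i_k}r_k$ gives $(1-\rho)\gamma_{i_k}r_k^2$, which is at least $(1-\rho)\gamma_*r_k^2$.

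Once \eqref{eq:residual-bounds}, and hence \eqref{eq:fundamental}, is in hand, the theorem is a routine telescoping argument.
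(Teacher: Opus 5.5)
Your proposal is correct and follows the paper's proof: you telescope \eqref{eq:fundamental}, bound the minimum by the average, and pass to $\Phi$ through $\|\Phi(x^k)\|\le C_R r_k$. Your handling of finite termination and your check of \eqref{eq:residual-bounds} supply details the paper leaves implicit, and your caveat about comparing minima index by index is unnecessary, since $\|\Phi(x^k)\|\le C_Rr_k$ for every $k$ already gives the inequality between the minima (which is all your $\hat k$ argument shows).
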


\begin{proof}
Telescoping \eqref{eq:fundamental} gives
$(K+1)\min_{0\leq k\leq K}r_k^2\leq\|x^0-x^*\|^2/\alpha$.
This proves \eqref{eq:best-R}; the first inequality in
\eqref{eq:residual-bounds} gives \eqref{eq:best-Phi}.
\end{proof}

Theorem~\ref{thm:complexity} makes the residual complexity explicit, whereas the original
analyses of the compared spectral projection methods establish qualitative
convergence without such a bound.

\subsection{Linear rate under strong monotonicity}

Assume that for some $m>0$,
\begin{equation}\label{eq:strong-condition}
    \sym(A)\succeq(1+m)I.
\end{equation}
Then $\Phi$ is $m$-strongly monotone.
Moreover, \eqref{eq:strong-condition} implies
$\sigma_{\min}(A)\geq1+m>1$; hence \eqref{eq:ave} has a unique solution for every
$b\in\R^n$ by \cite[Proposition~3(i)]{MangasarianMeyer2006}.

\begin{theorem}[Linear convergence]\label{thm:linear}
Suppose \eqref{eq:strong-condition} and \eqref{eq:gamma} hold, and let $x^*$ be the
unique solution.  With
\begin{equation}\label{eq:linear-factor}
    \eta:=(\alpha m^2)/C_R^2\in(0,1),
\end{equation}
the CPPC iterates satisfy
\begin{equation}\label{eq:linear-rate}
    \|x^{k+1}-x^*\|^2
    \leq(1-\eta)\|x^k-x^*\|^2.
\end{equation}
Consequently, $x^k\to x^*$ globally and the AVE residual converges R-linearly.
\end{theorem}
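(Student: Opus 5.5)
The plan is to combine the fundamental inequality \eqref{eq:fundamental} with an error bound that turns $r_k$ into a multiple of $\|x^k-x^*\|$. Since \eqref{eq:strong-condition} implies \eqref{eq:monotonicity-condition} and $S=\{x^*\}\neq\varnothing$, assumption \eqref{eq:base-assumption} holds. Theorem~\ref{thm:global} then makes the algorithm well defined, and \eqref{eq:fundamental} holds at every nonterminal iteration. If the method terminates at $x^k$, then $x^k=x^*$ and \eqref{eq:linear-rate} is trivial from then on, reading $x^{j}=x^*$ for $j\geq k$. So it suffices to treat nonterminal $k$.

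The first step is the error bound. By the argument of Proposition~\ref{prop:monotonicity}, condition \eqref{eq:strong-condition} gives
\[
\langle\Phi(x)-\Phi(y),x-y\rangle\geq (x-y)^\top(\sym(A)-I)(x-y)\geq m\|x-y\|^2 .
\]
Taking $y=x^*$, where $\Phi(x^*)=0$, and applying Cauchy--Schwarz yields
\[
m\|x^k-x^*\|\leq\|\Phi(x^k)\|.
\]
The first inequality in \eqref{eq:residual-bounds} then gives
\[
\|x^k-x^*\|\leq \frac{C_R}{m}\,r_k,
\qquad\text{i.e.}\qquad
r_k^2\geq \frac{m^2}{C_R^2}\|x^k-x^*\|^2 .
\]
Substituting this into \eqref{eq:fundamental} gives \eqref{eq:linear-rate} with $\eta=\alpha m^2/C_R^2$.

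The second step checks that $\eta\in(0,1)$. Positivity is clear. For $\eta<1$, I would avoid bounding constants directly and argue from the inequality itself. At a nonterminal iterate $x^k\neq x^*$, the combined bound reads
\[
0\leq\|x^{k+1}-x^*\|^2\leq(1-\eta)\|x^k-x^*\|^2,
\]
which forces $\eta\leq1$. Strictness can be had by a direct comparison of constants. Since $C_\Phi\geq C_R\geq\sqrt n\geq1$, we have
\[
\alpha\leq(1-\rho)^2\gamma_*^2/C_R^2 .
\]
Also $m\leq a_{ii}-1\leq\ell_i$ for every $i$, hence $\gamma_*m\leq\rho<1$. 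It follows that
\[
\eta\leq (1-\rho)^2\rho^2/C_R^4<1 .
\]
This constant bookkeeping is the only mildly delicate point, and it is routine.

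Finally, iterating \eqref{eq:linear-rate} gives $\|x^k-x^*\|\leq(1-\eta)^{k/2}\|x^0-x^*\|$, so the iterates converge globally. For the residual, Lipschitz continuity of $\Phi$ with constant $\|A\|+1$ yields
\[
\|\Phi(x^k)\|\leq(\|A\|+1)(1-\eta)^{k/2}\|x^0-x^*\|,
\]
which is R-linear convergence of the AVE residual. I expect no real obstacle here; the key step is the strong-monotonicity error bound $m\|x-x^*\|\leq\|\Phi(x)\|$, chained through $C_R$ into the $r_k$ decrement of \eqref{eq:fundamental}.
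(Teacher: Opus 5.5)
Your proposal is correct and follows the paper's proof essentially step for step. You substitute the strong-monotonicity error bound $m\|x^k-x^*\|\le\|\Phi(x^k)\|\le C_Rr_k$ into \eqref{eq:fundamental}, obtain $\eta<1$ from $\gamma_*m\le\rho$ via $\ell_i\ge a_{ii}-1\ge m$, and use Lipschitz continuity of $\Phi$ for the R-linear residual rate; the only cosmetic difference is that you bound $\eta$ by $\rho^2(1-\rho)^2/C_R^4$ (using $C_\Phi\ge C_R$) instead of the paper's $\rho^2(1-\rho)^2/(C_\Phi^2C_R^2)$.
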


\begin{proof}
Strong monotonicity gives $m\|x^k-x^*\|\leq\|\Phi(x^k)\|\leq C_Rr_k$. Substitution in \eqref{eq:fundamental} proves \eqref{eq:linear-rate}.  Furthermore,
$\gamma_*m\leq\rho$ follows from $\ell_i\geq a_{ii}-1\geq m$; thus, by the
definitions of $\alpha$ and $\eta$,
$0<\eta\leq\rho^2(1-\rho)^2/(C_\Phi^2C_R^2)<1$.  Finally, $\Phi$ is Lipschitz continuous with
constant at most $\|A\|+1$, so the distance estimate implies R-linear residual
convergence.
\end{proof}

\section{Numerical experiments}\label{sec:numerical}

\paragraph{Setup}
We compare CPPC with SGP, MMSGP and MMSGPs in Python 3.11.5 using NumPy 1.24.3
with MKL on a 2.60 GHz Intel Core i7-10750H and 16 GB RAM.  BLAS is restricted
to one thread and all matrices are stored densely.  Each deterministic instance is
run once, with the method order rotated between instances.  We stop when $\frac{\|\Phi(x^k)\|}{\max\{1,\|b\|\}}\leq10^{-6}$ or after $10^4$ iterations.  For CPPC, $\rho=0.75$ and $\ell_i=a_{ii}-1$; the spectral methods use $\beta=0.5$, $\sigma=0.01$ and
$r=0.1$.  MMSGP and MMSGPs additionally use $\tau=10^{-3}$ and $\delta=10^{-2}$ whose numerical values were not specified in the original experiments, and the MMSGPs relaxation is $1.6$.  The seed is 2026, and all methods receive the same $(A,b,x^0)$.

The test matrices are constructed from
\begin{equation}\label{eq:test-matrix}
    A=(1+m)I+L_w+\varepsilon L_{\rm tail}+\kappa K_w,
\end{equation}
where $L_w$ is the band-graph Laplacian with edge weight $1/w$ for offsets
$1,\ldots,w$, $K_w^\top=-K_w$ has upper-band entries $1/(2w)$, and the
distance-$j$ edge weight of $L_{\rm tail}$ is proportional to
$[1+(j/w)^2]^{-1}$ and normalized to total weight $1/2$.  Consequently,
$\sym(A)=(1+m)I+L_w+\varepsilon L_{\rm tail}\succeq(1+m)I$.
In every test, an exact solution $x^*$ is prescribed, $b=Ax^*-|x^*|$, and
$x^0=0$.

\paragraph{Effect of dimension.}
We first set $m=0.05$, $w=5$, $\varepsilon=0.01$ and $\kappa=0.5$.
The support of $x^*$ is one block containing approximately $0.05n$ coordinates,
with alternating values $\pm3$.  Thus \eqref{eq:test-matrix} is genuinely dense,
while the solution and dominant couplings remain local.  Table~\ref{tab:dimension}
reports wall-clock time, iterations and full matrix--vector products (MV).

\begin{table}[H]
\caption{Dimension test: time (s), iterations and MV; the smallest time and MV in each row are bold.}
\label{tab:dimension}
\centering
\footnotesize
\setlength{\tabcolsep}{2.5pt}
\begin{tabular}{r*{4}{rrr}}
\toprule
$n$ & \multicolumn{3}{c}{CPPC} & \multicolumn{3}{c}{SGP}
    & \multicolumn{3}{c}{MMSGP} & \multicolumn{3}{c}{MMSGPs}\\
\cmidrule(lr){2-4}\cmidrule(lr){5-7}\cmidrule(lr){8-10}\cmidrule(lr){11-13}
& Time & Iter. & MV & Time & Iter. & MV & Time & Iter. & MV & Time & Iter. & MV\\
\midrule
10   & 0.0147 & 367  & \textbf{368}  & \textbf{0.0124} & 214 & 429
     & 0.0196 & 204 & 409  & 0.0172 & 185 & 371\\
50   & 0.0128 & 269  & 270  & \textbf{0.0049} & 88  & 177
     & 0.0251 & 250 & 501  & 0.0060 & 55  & \textbf{111}\\
100  & \textbf{0.0071} & 166 & \textbf{167} & 0.0094 & 162 & 325
     & 0.0267 & 294 & 589  & 0.0276 & 188 & 377\\
500  & 0.0960 & 1093 & 1094 & \textbf{0.0516} & 372 & \textbf{745}
     & 0.1466 & 708 & 1417 & 0.1058 & 454 & 909\\
1000 & \textbf{0.0698} & 195 & \textbf{196} & 0.4425 & 525 & 1051
     & 0.4444 & 764 & 1529 & 0.3562 & 676 & 1353\\
2000 & \textbf{0.7708} & 392 & \textbf{393} & 2.5868 & 566 & 1133
     & 4.3680 & 819 & 1639 & 2.4501 & 581 & 1163\\
3000 & \textbf{2.4756} & 589 & \textbf{590} & 4.7610 & 546 & 1093
     & 8.4454 & 836 & 1673 & 5.0632 & 587 & 1175\\
\bottomrule
\end{tabular}
\end{table}

The ranking is mixed for $n\leq500$, where SGP is often faster, whereas CPPC is
consistently fastest for $n\geq1000$ and also attains the smallest MV count.
Thus the large-$n$ advantage reflects both competitive iteration counts and the
reduction from two spectral products per iteration to one product and a column update.

\paragraph{Residual localization and coupling width.}
With $n=1000$, $m=0.05$ and $\varepsilon=\kappa=0$, we vary one feature at a time:
the support fraction in $\{0.01,0.05,0.1,0.2,0.5,1\}$, a contiguous or uniformly
dispersed support, and $w\in\{1,5,20,100\}$.  The respective baseline values are
$0.05$, contiguous support and $w=1$.
Figure~\ref{fig:structure} plots the runtime ratio relative to CPPC, so values above
one favor CPPC.

\begin{figure}[pos=!t]
\centering
\includegraphics[width=0.8\linewidth]{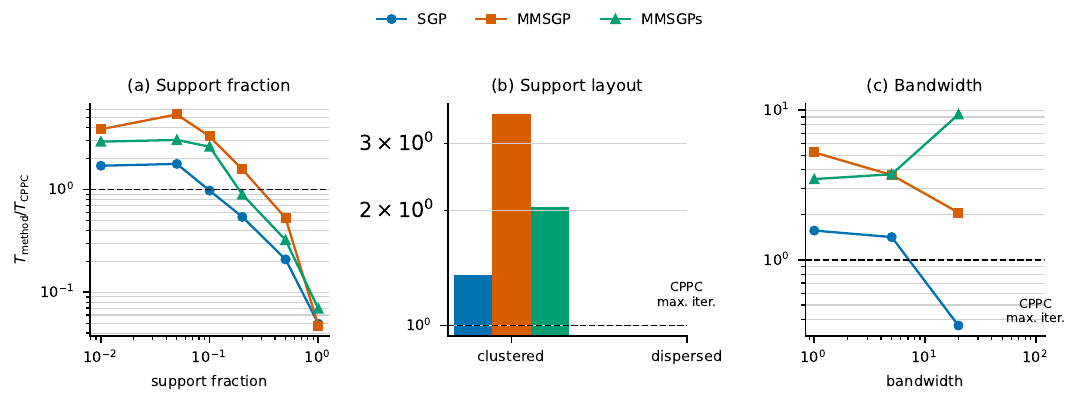}
\caption{Structural tests: runtime ratios relative to CPPC.  Missing ratios mark
CPPC runs that reached $10^4$ iterations without satisfying the tolerance. Values above one favor CPPC.}
\label{fig:structure}
\end{figure}

CPPC is fastest at support fractions $0.01$ and $0.05$ and for contiguous support,
but SGP overtakes it as the support becomes diffuse; CPPC reaches the iteration cap
for dispersed support.  It is competitive for $w\leq5$ but reaches the cap at
$w=100$.  Hence residual localization and local coupling, rather than sparsity alone,
are favorable to the coordinate predictor.

\paragraph{Matrix geometry.}
Finally, we fix $n=1000$, $w=5$, $\varepsilon=0$, and a contiguous support fraction
of $0.05$.  We vary $\kappa\in\{0,0.5,2,10\}$ with $m=0.05$, and then vary
$m\in\{0.02,0.05,0.5,2\}$ with $\kappa=0$.  Figure~\ref{fig:geometry} again
shows runtime ratios relative to CPPC.

\begin{figure}[pos=!t]
\centering
\includegraphics[width=0.8\linewidth]{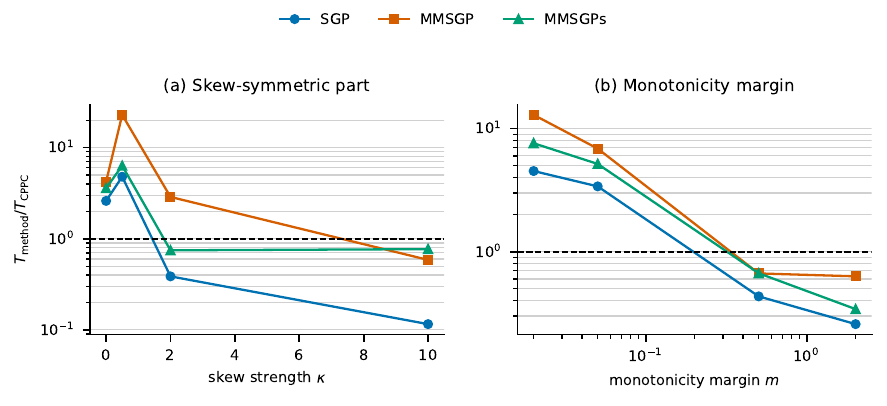}
\caption{Matrix-geometry tests: runtime ratios relative to CPPC; values above one
favor CPPC.}
\label{fig:geometry}
\end{figure}

CPPC is fastest for $\kappa=0$ and $0.5$, whereas larger skew-symmetric parts
increase the number of corrections and favor SGP.  Thus allowing nonsymmetric $A$
does not imply insensitivity to a large skew-symmetric component.  CPPC is also
fastest for $m=0.02$ and $0.05$; when $m\geq0.5$, the spectral methods converge in
few iterations and become faster.  Hence CPPC is most competitive near the boundary
of monotonicity, whereas strong monotonicity favors full-residual spectral steps.

\paragraph{SGP implementation.}
The fact that MMSGP does not generally outperform SGP in these tests requires a
clarification.  Algorithm 2.1 of \cite{Yu2009SGP} defines $\theta_{k+1}=\frac{\langle s_k,s_k\rangle}{\langle s_k,y_k\rangle}$,
but a later display in the same paper contains $\langle y_k,y_k\rangle$ in the
denominator.  In Example 2 of \cite{Yu2021MMSGP}, the latter variant reproduces the
reported SGP counts exactly, whereas the stated formula gives substantially fewer
iterations.  This exact agreement is consistent with the use of the latter denominator in the reported comparison, which results in a weaker SGP baseline.  We use the formula in Algorithm 2.1, which explains
why MMSGP need not improve upon SGP here.

Overall, CPPC is most competitive for large systems with localized residuals, local
coupling, a small monotonicity margin and a moderate skew-symmetric component; the
spectral methods can be faster outside this regime.

\section{Conclusion}
We proposed the line-search-free CPPC method for monotone AVEs.  Its exact coordinate
proximal predictor yields an algebraic acceptance certificate, avoiding backtracking and
leaving one new matrix--vector product per iteration.  Under $\sym(A)\succeq I$ and a
nonempty solution set, CPPC converges globally with an $O(K^{-1/2})$ best-iterate
residual bound; a positive
monotonicity margin further yields linear convergence.  Numerical results indicate its
advantage for large, locally coupled problems with localized residuals and a small
monotonicity margin.  Future work will investigate adaptive coordinate selection and
extensions to broader structured nonsmooth equations.

\section*{Data availability}

The code and data supporting the numerical results are publicly available at
\url{https://github.com/SKY-Tower-ST/CPPC-AVE}.

\end{document}